\documentclass[preprint,review,10pt]{amsart}
\usepackage{amsmath,amssymb,amsfonts,xspace,mathrsfs}
\newtheorem{theorem}{Theorem}[section]
\newtheorem{lemma}[theorem]{Lemma}

\theoremstyle{definition}
\newtheorem{definition}[theorem]{Definition}
\newtheorem{example}[theorem]{Example}

\newtheorem{proposition}[theorem]{Proposition}
\newtheorem{corollary}[theorem]{Corollary}

\theoremstyle{remark}
\newtheorem{remark}[theorem]{Remark}

\numberwithin{equation}{section}

\begin{document}

\title{$S$-filters of bounded lattices}

\author{Mahdi Anbarloei}
\address{Department of Mathematics, Faculty of Sciences,
Imam Khomeini International University, Qazvin, Iran.
}

\email{m.anbarloei@sci.ikiu.ac.ir}


\subjclass[2020]{ 97H50, 06A11  }


\keywords{  $S$-filter, minimal  prime  filter, maximal $S$-filter}

\begin{abstract}
 In this paper, we introduce and study the notion of  $S$-filters in  bounded distributive lattices.

\end{abstract}
\maketitle

\section{Introduction} 
We assume throughout this paper that all lattices are bounded, meaning they have a least element $0$ and a greatest elemen $1$. Considering lattices as generalizations of rings, it is natural to examine which ring properties remain valid in lattice theory. Since the absence of subtraction prevents many ring results from having direct counterparts, filling this conceptual gap remains a heavily pursued objective in the literature.

Prime and primary ideals play a pivotal role in ring theory, inspiring numerous papers on their concepts and extensions. However, the research paradigm for generalizing prime ideals shifted significantly in 2020, when Hamed and Malek  introduced the notion of $S$-prime ideals in a commutative ring $A$ where $S$ is a multiplicatively closed subset of $A$.  Recently, The notable concept of $S$-ideals, defined with respect to a multiplicatively closed subset, has recently been introduced in \cite{Khashan}.   A  proper ideal $J$  of $A$ is said to be   an $S$-ideal if for all $x_1,x_2 \in A$, $x_1x_2 \in J$ and $x_1 \in S$ imply that $x_2 \in J$ \cite{Khashan}. 

In \cite{Atani2}, while defining a $\vee$-closed subset of a lattice $\mathscr{L}$, Atani extended the concept  of $S$-primeness from
a commutative ring to $S$-primeness in a lattice. This extension was called $S$-prime filters. Moreover, the notions of $S$-2-absorbing   and weakly $S$-2-absorbing filters in lattices were introduced in \cite{Atani3} and \cite{Atani5}, respectively.

In this paper, we aim to introduce the notion of $S$-filters in a bounded distributive lattice $\mathscr{L}$ as an extension of $S$-ideal property in commutative rings,  offering a new perspective on the classification of filters by means of a  $\vee$-closed subset $S$ in $\mathscr{L}$. Among the various findings established in this work,   we construct the smallest $S$-filter of $\mathscr{L}$ containing a given filter $\mathfrak{p}$ in in Theorem \ref{small}. Theorem \ref{3} provides an $S$-filter analogue of the Prime Avoidance Lemma in Lattices. It is proved in Proposition \ref{intersection} that the class of   $S$-filters is closed under arbitrary intersections. Theorem \ref{khamen} indicates that a filter $F$ of $\mathscr{L}$ disjoin with $S$ can be extended to an $S$-filter of $\mathscr{L}$. Finally, we examine the  behavior of $S$-filters under homomorphis and in Cartesian products of lattices in Theorem \ref{homo} and Theorem \ref{car}, respectively.

\section{Preliminaries}
In this section, we briefly recall some preliminary definitions and basic results  from \cite{Birkhoff} and other research works that will be utilized throughout this paper.

A poset $(\mathscr{L}, \le)$ is a  \textit{lattice}  if   each pair $u, v \in \mathscr{L}$ has a  greatest lower bound (briefly, g.l.b.) and a   least upper bound (briefly, l.u.b.), denoted by $u \wedge v$ and $u \vee v$, respectively. 
A lattice $\mathscr{L}$ is called  \textit{complete}  if every subset of $\mathscr{L}$ has a greatest lower bound and least upper bound in $\mathscr{L}$. It follows immediately that every nonvoid complete lattice contains a least element $0$ and a greatest element $1$. In this case, $\mathscr{L}$ is a bounded lattice.

Let $\mathscr{L}$ be a lattice. A non-empty subset  $F$ of $\mathscr{L}$ is said to be a \textit{filter}, if $u \wedge v \in F$ for all $u, v \in F$, and for $u \in F$ and $w \in \mathscr{L}$, $u \le w$ implies $w \in F$. Assume that $\mathscr{L}$ is a lattice with $1$. Then,  $1$ is an element of   every filter of $\mathscr{L}$ and $\{1\}$ is a filter of $\mathscr{L}$. By $\mathcal{F}(\mathscr{L})$, we mean the set of all filters of $\mathscr{L}$. Let $u \in \mathscr{L}$.  A complement of $u$ in $\mathscr{L}$ is an element $v \in \mathscr{L}$ such that $u \vee v = 1$ and $u \wedge v= 0$.   $\mathscr{L}$ is called a complemented lattice if any element of $\mathscr{L}$ has a complement in $\mathscr{L}$.

\begin{definition}
\cite{Birkhoff} Let  $F$ be a proper filter of $\mathscr{L}$. We say that 
\begin{enumerate}
\item $F$ is \textit{prime} if $u \vee v \in F$ implies that $u \in F$ or $v \in F$.
\item  $F$ is  \textit{maximal} if whenever $H$ is a filter in $\mathscr{L}$ such that $F \subsetneqq H$, then $H = \mathscr{L}$.
\end{enumerate}
\end{definition}

\begin{definition} \cite{Atani1}
Let $\mathscr{L}$ be a lattice and $X \subseteq \mathscr{L}$.  The filter generated by $X$, denoted by $\text{T}(A)$,  is defined as 
$$\text{T}(X) = \bigcap \{ F \in  \mathcal{F}(\mathscr{L}) \mid   X \subseteq F \}.$$
\end{definition}

A filter $F$ of a lattice $\mathscr{L}$ is   finitely generated if $F = T(X)$ for some finite subset $X$ of $\mathscr{L}$.
\begin{lemma} \cite{Atani1}
Let $X $ be a non-empty of  $\mathscr{L}$. Then, \[T(X) = \{u \in \mathcal{L} \mid  u_1 \wedge u_2 \wedge \dots \wedge u_n \le u \text{ for some } u_i \in X \; (1 \le i \le n)\}.\]
\end{lemma}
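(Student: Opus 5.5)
Let $G$ denote the right-hand side, i.e. the set of all $u \in \mathscr{L}$ such that $u_1 \wedge \dots \wedge u_n \le u$ for some $n \ge 1$ and $u_1,\dots,u_n \in X$. The plan is to prove the two inclusions $G \subseteq T(X)$ and $T(X) \subseteq G$.

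\emph{Step 1: $G \subseteq T(X)$.} Take any filter $F$ with $X \subseteq F$ and any $u \in G$, so $u_1 \wedge \dots \wedge u_n \le u$ with each $u_i \in X \subseteq F$. Since a filter is closed under binary meets, an induction on $n$ gives $u_1 \wedge \dots \wedge u_n \in F$. Upward closure of $F$ then gives $u \in F$. As $F$ was an arbitrary filter containing $X$, $u$ lies in their intersection $T(X)$. The intersection is over a non-empty family, since $\mathscr{L}$ itself is a filter containing $X$.

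\emph{Step 2: $G$ is a filter containing $X$.}
\begin{itemize}
\item $G \neq \emptyset$ and $X \subseteq G$: for $x \in X$, take $n=1$ and $u_1 = x$ to get $x \le x$. Here we use that $X$ is non-empty.
\item Upward closure: if $u \in G$ via $u_1 \wedge \dots \wedge u_n \le u$ and $u \le w$, then $u_1 \wedge \dots \wedge u_n \le w$ by transitivity, so $w \in G$.
\item Closure under meets: if $u_1 \wedge \dots \wedge u_n \le u$ and $v_1 \wedge \dots \wedge v_m \le v$ with all $u_i, v_j \in X$, then the combined list of $n+m$ elements of $X$ satisfies $u_1 \wedge \dots \wedge u_n \wedge v_1 \wedge \dots \wedge v_m \le u \wedge v$. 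This uses that $\wedge$ is monotone in each argument, together with associativity and commutativity of $\wedge$. Hence $u \wedge v \in G$.
\end{itemize}
So $G$ is a member of the family of filters containing $X$, and therefore $T(X) \subseteq G$.

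Combining Steps 1 and 2 gives $T(X) = G$. No step is a real obstacle. The only points needing care are the monotonicity of meets in the last bullet of Step 2, and the fact that $X$ must be non-empty so that $G$ is non-empty and therefore a genuine filter.
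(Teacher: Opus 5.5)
Your proof is correct. You show that the set of elements lying above a finite meet of elements of $X$ is contained in every filter containing $X$, and that this set is itself such a filter (nonemptiness being exactly where $X \neq \varnothing$ is needed); this two-inclusion argument is the standard one. The paper gives no proof of its own here and only quotes the lemma from the literature, so there is nothing further to compare.
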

\begin{definition}
 \cite{Atani2} A lattice $\mathscr{L}$ containing  $1$ is called an \textit{$\mathscr{L}$-domain} if for any $a, b \in \mathscr{L}$, \[ u \vee v = 1 \implies u = 1 \text{ or } v = 1.  \] 
 \end{definition}
 Consequently, $\mathscr{L}$ is an $\mathscr{L}$-domain if and only if $\{1\}$ is a prime filter of $\mathscr{L}$.

\begin{definition}
\cite{Atani2} A subset $S$ of $\mathscr{L}$ is \textit{$\vee$-closed} if  $s_1 \vee s_2 \in S$ for all $s_1, s_2 \in S$ and $0 \in S$.
\end{definition}
Let $F$ be a prime filter of $\mathscr{L}$. Then, $\mathscr{L} \setminus F$ is a $\vee$-closed subset of $\mathscr{L}$. 
\section{$S$-filters}
In this section, we introduce and investigate the  concept of $S$-filters in a bounded lattice. We initiate our study with the following definition.
\begin{definition}
Let $S \subseteq \mathscr{L}$ be   $\vee$-closed. A proper filter $\mathfrak {q}$ of $\mathscr{L}$ is called an $S$-filter if for all $u,v \in \mathscr{L}$, $u \vee v \in \mathfrak{q}$ and $u \in S$ imply $v \in \mathfrak{q}$.
\end{definition}
\begin{example}
Consider the lattice $\mathscr{L} = \{0, u, v, w, 1\}$   with the relations $0 \le u \le w \le 1$, $0 \le v \le w \le 1$, $u \vee v = w$, and $u \wedge v = 0$. In this lattice,  $S=\{0,u\}$ is a $\vee$-closed subset of $\mathscr{L}$. The filter $\mathfrak{q}_1=\{1.w,v\}$ is an $S$-filter of $\mathscr{L}$. However, the filter $\mathfrak{q}_2= \{1, w\}$  is  not an $S$-filter  of $\mathscr{L}$ since $u \vee v \in  \mathfrak{q}$ and $u \in S$ but $v \notin \mathfrak{q}$. Moreover, it is easy to check that  $\mathfrak{q}_3= \{1, w,u\}$ is  not an $S$-filter of $\mathscr{L}$.
\end{example}
\begin{remark} \label{0}
A prime filter $\mathfrak {q}$ of $\mathscr{L}$ is an $S$-filter if and only if $S \cap  \mathfrak {q}=\varnothing$.
\end{remark}
\begin{proposition} \label{1}
Let $S \subseteq \mathscr{L}$ be   $\vee$-closed and $\mathfrak {q}$ be an $S$-filter of $\mathscr{L}$. 
\begin{enumerate}
\item $\mathfrak {q} \cap S=\varnothing$.
\item $(\mathfrak {q} :_{\mathscr{L}}\mathfrak {p})$ is an $S$-filter of $\mathscr{L}$ such that $\mathfrak {p} \subseteq \mathscr{L} \setminus \mathfrak {q}$.
\end{enumerate}
\end{proposition}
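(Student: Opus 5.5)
For part (1) I argue by contradiction, using only that $\mathfrak{q}$ is proper. For part (2) I take $(\mathfrak{q}:_{\mathscr{L}}\mathfrak{p})=\{x\in\mathscr{L}\mid x\vee a\in\mathfrak{q}\text{ for all }a\in\mathfrak{p}\}$, the usual residual for filters. The paper has not defined this notation before the statement, so this is an assumption. I read the hypothesis $\mathfrak{p}\subseteq\mathscr{L}\setminus\mathfrak{q}$ as saying that $\mathfrak{p}$ is a nonempty subset disjoint from $\mathfrak{q}$. It cannot be a filter, since every filter contains $1\in\mathfrak{q}$.

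\textbf{Part (1).} Suppose $s\in\mathfrak{q}\cap S$. Write $s=s\vee 0$, with $s\in S$ and $s\vee 0\in\mathfrak{q}$. The $S$-filter condition, applied with $u=s$ and $v=0$, gives $0\in\mathfrak{q}$. Since filters are upward closed, this forces $\mathfrak{q}=\mathscr{L}$, contradicting properness. Hence $\mathfrak{q}\cap S=\varnothing$.

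\textbf{Part (2).} I check three things in turn.

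First, $(\mathfrak{q}:_{\mathscr{L}}\mathfrak{p})$ is a filter.
\begin{itemize}
\item It is nonempty: $1\vee a=1\in\mathfrak{q}$ for every $a$, so $1$ belongs to it.
\item It is upward closed: if $x\le y$ and $x$ is in it, then $x\vee a\le y\vee a$, so $y\vee a\in\mathfrak{q}$ for all $a\in\mathfrak{p}$.
\item It is closed under meets: distributivity gives $(x\wedge y)\vee a=(x\vee a)\wedge(y\vee a)$, which lies in $\mathfrak{q}$ because $\mathfrak{q}$ is closed under meets.
\end{itemize}

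Second, the filter is proper. If $0$ belonged to it, then $a=0\vee a\in\mathfrak{q}$ for some (indeed every) $a\in\mathfrak{p}$. This contradicts $\mathfrak{p}\subseteq\mathscr{L}\setminus\mathfrak{q}$ with $\mathfrak{p}\neq\varnothing$.

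Third, the $S$-filter property holds. Let $u\in S$ and $u\vee v\in(\mathfrak{q}:_{\mathscr{L}}\mathfrak{p})$. For each $a\in\mathfrak{p}$ we have $u\vee(v\vee a)=(u\vee v)\vee a\in\mathfrak{q}$. Since $\mathfrak{q}$ is an $S$-filter and $u\in S$, it follows that $v\vee a\in\mathfrak{q}$. As $a$ was arbitrary, $v\in(\mathfrak{q}:_{\mathscr{L}}\mathfrak{p})$.

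The only delicate step is properness: it is exactly where the hypothesis on $\mathfrak{p}$ is used, and where $\mathfrak{p}$ must be taken nonempty. Distributivity of $\mathscr{L}$ is needed only for closure under meets.
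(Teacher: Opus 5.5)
Your proof is correct and follows the paper's argument: part (1) writes $s=s\vee 0$ to force $0\in\mathfrak{q}$, and part (2) applies the $S$-filter condition to $u\vee(v\vee a)\in\mathfrak{q}$ for each $a\in\mathfrak{p}$. The paper checks only this last step, so your verification that $(\mathfrak{q}:_{\mathscr{L}}\mathfrak{p})$ is a proper filter fills in what the paper leaves implicit; that verification is where distributivity and the nonemptiness of $\mathfrak{p}$ and its disjointness from $\mathfrak{q}$ are actually used.
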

\begin{proof}
(1) Suppose that $\mathfrak {q} \cap S \neq \varnothing$. There exists $u \in \mathfrak {q} \cap S$. Since $u \vee 0 \in \mathfrak {q}$ and $u \in S$, we get $0 \in \mathfrak {q}$ which is contradiction. Thus, $\mathfrak {q} \cap S=\varnothing$.

(2) Let for $u, v \in \mathscr{L}$, $u \vee u \in (\mathfrak {q} :_{\mathscr{L}}\mathfrak {p})$  and $u \in S$. Then, $u \vee v \vee w \in \mathfrak {q}$ for all $w \in \mathfrak {p}$. Since $\mathfrak {q}$ is an $S$-filter of $\mathscr{L}$, we have $v \vee w \in \mathfrak {q}$ for all $w \in \mathfrak {p}$ and so $v \in (\mathfrak {q} :_{\mathscr{L}}\mathfrak {p})$. Hence, $(\mathfrak {q} :_{\mathscr{L}}\mathfrak {p})$ is an $S$-filter of $\mathscr{L}$.
\end{proof}
\begin{theorem} \label{2}
Let $S \subseteq \mathscr{L}$ be   $\vee$-closed and $\mathfrak {q}$ be a proper filter of  $\mathscr{L}$.  The following statements are equivalent: 
\begin{enumerate} 
\item[(i)] $\mathfrak {q}$ is an $S$-filter of $\mathscr{L}$;
\item[(ii)] For any two filters $\mathfrak{r}$ and $\mathfrak{p}$  of $\mathscr{L}$, $\mathfrak{r} \vee \mathfrak{p} \subseteq \mathfrak{q}$ and $\mathfrak{r}  \cap S \neq \varnothing$ imply $\mathfrak{p} \subseteq \mathfrak{q}$. \end{enumerate}
\end{theorem}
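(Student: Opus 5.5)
The plan is to prove both implications directly from the definitions. I read $\mathfrak{r}\vee\mathfrak{p}$ as the set $\{a\vee b \mid a\in\mathfrak{r},\ b\in\mathfrak{p}\}$. Since filters are upward closed, every $a\vee b$ with $a\in\mathfrak{r}$, $b\in\mathfrak{p}$ lies in $\mathfrak{r}\cap\mathfrak{p}$. Conversely, any $w\in\mathfrak{r}\cap\mathfrak{p}$ equals $w\vee w$. So $\mathfrak{r}\vee\mathfrak{p}=\mathfrak{r}\cap\mathfrak{p}$, and I can use whichever description is convenient. The only real choice is which test filters to feed into (ii) for the converse. I will use the principal filters generated by single elements, described by the Lemma on $T(X)$.

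\emph{(i)$\Rightarrow$(ii).} Let $\mathfrak{r},\mathfrak{p}$ be filters with $\mathfrak{r}\vee\mathfrak{p}\subseteq\mathfrak{q}$, and pick $s\in\mathfrak{r}\cap S$. For an arbitrary $v\in\mathfrak{p}$, the element $s\vee v$ belongs to $\mathfrak{r}\vee\mathfrak{p}\subseteq\mathfrak{q}$. Because $s\in S$ and $\mathfrak{q}$ is an $S$-filter, we get $v\in\mathfrak{q}$. Hence $\mathfrak{p}\subseteq\mathfrak{q}$.

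\emph{(ii)$\Rightarrow$(i).} Let $u,v\in\mathscr{L}$ with $u\vee v\in\mathfrak{q}$ and $u\in S$. Put $\mathfrak{r}=T(\{u\})$ and $\mathfrak{p}=T(\{v\})$. By the Lemma on $T(X)$, these are exactly $\{x \mid u\le x\}$ and $\{y \mid v\le y\}$. Then $u\in\mathfrak{r}\cap S$, so $\mathfrak{r}\cap S\neq\varnothing$. For $a\in\mathfrak{r}$ and $b\in\mathfrak{p}$ we have $a\vee b\ge u\vee v\in\mathfrak{q}$, hence $a\vee b\in\mathfrak{q}$ because $\mathfrak{q}$ is upward closed. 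Thus $\mathfrak{r}\vee\mathfrak{p}\subseteq\mathfrak{q}$. Now (ii) gives $\mathfrak{p}\subseteq\mathfrak{q}$, and in particular $v\in\mathfrak{q}$. Since $\mathfrak{q}$ is proper by hypothesis, it is an $S$-filter.

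No step is a genuine obstacle. The only point needing care is pinning down the meaning of $\mathfrak{r}\vee\mathfrak{p}$, which the paper has not defined. If it were instead meant as the filter generated by $\mathfrak{r}\cup\mathfrak{p}$, the argument would need adjusting. Under the elementwise-join reading adopted above, which is the natural analogue of the ideal product in the ring setting of \cite{Khashan}, the argument goes through as written.
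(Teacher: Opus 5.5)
Your proof is correct and follows the paper's argument essentially verbatim. For (i)$\Rightarrow$(ii) you join an element of $\mathfrak{r}\cap S$ with an arbitrary $v\in\mathfrak{p}$, which the paper phrases as a contradiction. For (ii)$\Rightarrow$(i) you test with the principal filters $T(\{u\})$ and $T(\{v\})$, exactly as the paper does.

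Your explicit check that $\mathfrak{r}\vee\mathfrak{p}\subseteq\mathfrak{q}$ for these principal filters fills in a step the paper asserts without justification. So does your choice of the elementwise-join reading of $\mathfrak{r}\vee\mathfrak{p}$. That reading is in fact forced, not merely convenient. Under the generated-filter reading, $\mathfrak{p}\subseteq\mathfrak{r}\vee\mathfrak{p}\subseteq\mathfrak{q}$ would make (ii) hold for every proper filter. The equivalence would then simply be false, for example for $\{1,w\}$ in the paper's example. So the argument could not be "adjusted" to that reading.
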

\begin{proof}
(i) $\Longrightarrow$ (ii) Let $\mathfrak {q}$ be an $S$-filter of $\mathscr{L}$. Let us assumee that $\mathfrak{r} \vee \mathfrak{p} \subseteq \mathfrak{q}$ for filters  $\mathfrak{r}$ and $\mathfrak{p}$  of $\mathscr{L}$ and $\mathfrak{r}  \cap S \neq \varnothing$ but $\mathfrak{p} \nsubseteq \mathfrak{q}$. There exits $v \in \mathfrak{p}$ such that $v \notin \mathfrak{q}$. Since $\mathfrak{r}  \cap S \neq \varnothing$, there exists $u \in \mathfrak{r}  \cap S$. From $ u \vee v \in \mathfrak{q}$, it follows that $v \in \mathfrak{q}$ as $\mathfrak {q}$ is an $S$-filter of $\mathscr{L}$ and $u \in S$. This is a contradiction. Hence, (ii) holds.

(ii) $\Longrightarrow$ (i) Let for $u,v \in \mathscr{L}$,  $u \vee v \in \mathfrak {q}$  and $u \in S$. Put $\mathfrak{r}=T(\{u\})$ and $\mathfrak{p}=T(\{v\})$. From $u \in S$ it follows that $\mathfrak{r} \cap S \neq \varnothing$.  Since  $\mathfrak{r} \vee \mathfrak{p} \subseteq \mathfrak{q}$, we obtain $\mathfrak{p} \subseteq \mathfrak{q}$ by the assumption. This implies that $v \in \mathfrak{q}$. Thus, $\mathfrak {q}$ is an $S$-filter of $\mathscr{L}$.
\end{proof}
Let $S \subseteq \mathscr{L}$ be   $\vee$-closed and $\mathfrak{p}$ be a filter of $\mathscr{L}$. Then, we define $\mathfrak{p}_S=\{a \in \mathscr{L} \mid a \vee t \in \mathfrak{p} \  \mathrm{for\  some} \ t \in S\}$
\begin{theorem} \label{small}
Let $S \subseteq \mathscr{L}$ be   $\vee$-closed and $\mathfrak{p}$ be a filter of $\mathscr{L}$ such that $\mathfrak{p} \cap S=\varnothing$. Then, $\mathfrak{p}_S$  is the smallest $S$-filter of $\mathscr{L}$ containig $\mathfrak{p}$.
\end{theorem}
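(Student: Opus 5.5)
We verify in turn that $\mathfrak{p}_S$ contains $\mathfrak{p}$, is a filter, is proper, satisfies the $S$-filter condition, and lies inside every $S$-filter containing $\mathfrak{p}$. Throughout we use that $0 \in S$, that $S$ is $\vee$-closed, and that $\mathscr{L}$ is distributive.

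\textbf{Containment and filter axioms.} For $a \in \mathfrak{p}$ we have $a \vee 0 = a \in \mathfrak{p}$ with $0 \in S$, so $\mathfrak{p} \subseteq \mathfrak{p}_S$; in particular $\mathfrak{p}_S$ is nonempty. For upward closure, take $a \in \mathfrak{p}_S$ with witness $t$ and $a \le b$. Then $a \vee t \le b \vee t$, so $b \vee t \in \mathfrak{p}$ and $b \in \mathfrak{p}_S$. For meets, take $a, b \in \mathfrak{p}_S$ with witnesses $t_1, t_2$ and put $t = t_1 \vee t_2 \in S$. Then $a \vee t$ and $b \vee t$ lie in $\mathfrak{p}$ by upward closure, so their meet does too. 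By distributivity,
\[(a \vee t) \wedge (b \vee t) = (a \wedge b) \vee t,\]
hence $a \wedge b \in \mathfrak{p}_S$. This is the one place distributivity is needed.

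\textbf{Properness.} If $0 \in \mathfrak{p}_S$, then $0 \vee t = t \in \mathfrak{p}$ for some $t \in S$. This contradicts $\mathfrak{p} \cap S = \varnothing$, so $\mathfrak{p}_S$ is proper.

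\textbf{$S$-filter property.} Suppose $u \vee v \in \mathfrak{p}_S$ and $u \in S$. Choose $t \in S$ with $u \vee v \vee t \in \mathfrak{p}$. Then $t' = u \vee t \in S$ by $\vee$-closedness, and $v \vee t' \in \mathfrak{p}$, so $v \in \mathfrak{p}_S$.

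\textbf{Minimality.} Let $\mathfrak{q}$ be any $S$-filter with $\mathfrak{p} \subseteq \mathfrak{q}$, and let $a \in \mathfrak{p}_S$ with witness $t$. Then $t \vee a \in \mathfrak{p} \subseteq \mathfrak{q}$ and $t \in S$, so the defining condition of an $S$-filter (with $u = t$, $v = a$) gives $a \in \mathfrak{q}$. Hence $\mathfrak{p}_S \subseteq \mathfrak{q}$.

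Every step is routine. The only points needing care are the meet-closure step, where distributivity is essential, and the properness step, where the hypothesis $\mathfrak{p} \cap S = \varnothing$ is used.
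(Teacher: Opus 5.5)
Your proof is correct and follows the same route as the paper's: the witness $t_1 \vee t_2$ with distributivity for meet-closure, the witness $u \vee t$ for the $S$-filter condition, and the same minimality argument. You also verify $\mathfrak{p} \subseteq \mathfrak{p}_S$ and that $\mathfrak{p}_S$ is proper, both of which the paper's proof omits, even though properness is exactly where the hypothesis $\mathfrak{p} \cap S = \varnothing$ is used.
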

\begin{proof}
Since  $1 \vee 0 \in \mathfrak{p}$, we obtain $1 \in \mathfrak{p}_S$ and so  $\mathfrak{p}_S$ is non-empty subset of $\mathscr{L}$. Let $a, b \in \mathfrak{p}_S $. Then there exists $t_a,t_b \in S$ such that $a \vee t_a, b \vee t_b \in  \mathfrak{p} $. Therefore, we get $(a \wedge b) \vee (t_a \vee t_b)=(a \vee  t_a \vee t_b) \wedge (b \vee t_a \vee t_b) \in \mathfrak{p}$. Hence, $a \wedge b \in \mathfrak{p}_S$ as $t_a \vee t_b \in S$. Moreover, we have  $(a \vee x) \vee  t_a =(a \vee t_a) \vee x \in \mathfrak{p}$ for all $x \in \mathscr{L}$. This means that $a \vee x \in \mathfrak{p}_S$. Therefore, $\mathfrak{p}_S$ is a filter of $\mathscr{L}$. Now, let for $u,v \in   \mathscr{L}$, $u \vee v \in \mathfrak{p}_S$ and $u \in S$. Hence, we obtain $u \vee v \vee t \in \mathfrak{p}$ for some $t \in S$. Since $u \vee t \in S$, we conclude that $v \in \mathfrak{p}_S$ by the definition of $\mathfrak{p}_S$. Assume that $\mathfrak{q}$ is an $S$-filter of $\mathscr{L}$ containing $\mathfrak{p}$ and $x \in \mathfrak{p}_S$. Then, there exists $t \in S$ with $x \vee t \in \mathfrak{p}$. This implies that $x  \vee t \in \mathfrak{q}$. Since $\mathfrak{q}$ is an $S$-filter of $\mathscr{L}$ and $t \in S$, we get $x \in \mathfrak{q}$. Thus,   $\mathfrak{p}_S \subseteq  \mathfrak{q}$ which implies $\mathfrak{p}_S$ is the smallest $S$-filter containig $\mathfrak{p}$.
\end{proof}
\begin{theorem} \label{ghasem}
Let $S \subseteq \mathscr{L}$ be   $\vee$-closed and $\mathfrak {q}$ be a filter of $\mathscr{L}$. Then, the following statements are equivalent:
\begin{enumerate}
\item $\mathfrak {q}$ is an $S$-filter of $\mathscr{L}$.
\item $\mathfrak {q}=(\mathfrak {q} :_{\mathscr{L}}t)$ for each $t \in S$.
\item $\mathfrak{q}_S=\mathfrak{q}$.
\end{enumerate}
\end{theorem}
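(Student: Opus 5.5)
The plan is to prove the cycle $(1)\Rightarrow(2)\Rightarrow(3)\Rightarrow(1)$ directly from the definitions. Following the usage in Proposition \ref{1}(2), I read $(\mathfrak{q}:_{\mathscr{L}}t)$ as $\{x\in\mathscr{L} \mid x\vee t\in\mathfrak{q}\}$. With this reading, $\mathfrak{q}_S=\bigcup_{t\in S}(\mathfrak{q}:_{\mathscr{L}}t)$ by the very definition of $\mathfrak{q}_S$. This identity links (2) and (3).

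A preliminary observation is used twice. For every $t\in\mathscr{L}$ we have $\mathfrak{q}\subseteq(\mathfrak{q}:_{\mathscr{L}}t)$, because $x\le x\vee t$ and $\mathfrak{q}$ is upward closed. Since $0\in S$, $(\mathfrak{q}:_{\mathscr{L}}0)=\mathfrak{q}$, and hence $\mathfrak{q}\subseteq\mathfrak{q}_S$ always. So in each step only the reverse inclusions need work.

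$(1)\Rightarrow(2)$: let $t\in S$ and $x\in(\mathfrak{q}:_{\mathscr{L}}t)$. Then $t\vee x\in\mathfrak{q}$ with $t\in S$, so the $S$-filter property gives $x\in\mathfrak{q}$. Hence $(\mathfrak{q}:_{\mathscr{L}}t)\subseteq\mathfrak{q}$, and equality follows from the observation.

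$(2)\Rightarrow(3)$: by the union description, $\mathfrak{q}_S=\bigcup_{t\in S}(\mathfrak{q}:_{\mathscr{L}}t)=\bigcup_{t\in S}\mathfrak{q}=\mathfrak{q}$, where $S\neq\varnothing$ because $0\in S$.

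$(3)\Rightarrow(1)$: suppose $u\vee v\in\mathfrak{q}$ with $u\in S$. Then $v\vee u\in\mathfrak{q}$ for the element $u\in S$, so $v\in\mathfrak{q}_S=\mathfrak{q}$. This argument does not need Theorem \ref{small}, and so it avoids having to check $\mathfrak{q}\cap S=\varnothing$ beforehand.

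The main obstacle is properness, which the definition of an $S$-filter requires but which (2) and (3) do not force. If $\mathfrak{q}=\mathscr{L}$, then (2) and (3) hold trivially while (1) fails. I would therefore read the statement with $\mathfrak{q}$ a proper filter, as in Theorem \ref{2}, and state this at the start of the proof. With that convention, $(3)\Rightarrow(1)$ needs only the closure property checked above. I would also note in passing that (3) then forces $\mathfrak{q}\cap S=\varnothing$: if $t\in\mathfrak{q}\cap S$, then $0\vee t\in\mathfrak{q}$ gives $0\in\mathfrak{q}_S=\mathfrak{q}$, contradicting properness. This is consistent with Proposition \ref{1}(1).
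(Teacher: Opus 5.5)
Your proof is correct and follows the same cycle $(1)\Rightarrow(2)\Rightarrow(3)\Rightarrow(1)$ as the paper, with essentially the same elementwise argument at each step. Your $(3)\Rightarrow(1)$, which concludes $v\in\mathfrak{q}_S=\mathfrak{q}$, and your added properness hypothesis are both right; they repair the paper's text, which writes $u$ where $v$ is meant and overlooks that $\mathfrak{q}=\mathscr{L}$ satisfies (2) and (3) but is not an $S$-filter.
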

\begin{proof}
(1) $\Longrightarrow$ (2) Let $\mathfrak {q}$ be an $S$-filter of $\mathscr{L}$. For for each $t \in S$, the inclusion $\mathfrak {q} \subseteq (\mathfrak {q} :_{\mathscr{L}}t)$ always holds. Now, let $x \in (\mathfrak {q} :_{\mathscr{L}}t)$ for $t \in S$. From $x \vee t \in \mathfrak {q}$ it follows that $x \in \mathfrak {q}$ as  $\mathfrak {q}$ is an $S$-filter of $\mathscr{L}$ and $t \in S$. This implies that $(\mathfrak {q} :_{\mathscr{L}}t) \subseteq \mathfrak {q}$ and so  $\mathfrak {q}=(\mathfrak {q} :_{\mathscr{L}}t)$.

(2) $\Longrightarrow$ (3) Let $\mathfrak {q}=(\mathfrak {q} :_{\mathscr{L}}t)$ for each $t \in S$. Take any $a \in \mathfrak {q}_S$. Then, we have $a \vee s \in \mathfrak {q}$ for some $s \in S$. This means that $a \in (\mathfrak {q} :_{\mathscr{L}}s)$. By the hypothesis, we get $a \in \mathfrak {q}$ and so $\mathfrak{q}_S \subseteq \mathfrak{q}$. Since  the reverse containment is obvious, we have $\mathfrak{q}_S=\mathfrak{q}$.

(3) Let for $u,v \in \mathscr{L}$, $u \vee v \in \mathfrak {q}$ and $u \in S$. This means that $u \in \mathfrak{q}_S$ by the definition of $\mathfrak{q}_S$. Then, we have $u \in \mathfrak{q}$ by the hypothesis. Consequently, $\mathfrak {q}$ is an $S$-filter of $\mathscr{L}$.
\end{proof}
Now, we present an  $S$-filter version of the celebrated Prime Avoidance Lemma.
\begin{theorem} \label{3}
Let $S \subseteq \mathscr{L}$ be   $\vee$-closed. Let $\mathfrak {p},\mathfrak{q}_1,\ldots,  \mathfrak{q}_n$ be proper filters of $\mathscr{L}$ such that $\mathfrak{p} \subseteq \bigcup_{i=1}^n \mathfrak{q}_i$ but no $\mathfrak{q}_i$ can be eliminated from the union. If $\mathfrak{q}_1$ is an $S$-filter of $\mathscr{L}$ and $\mathfrak{q}_i \cap S \neq \varnothing$ for all $2 \leq i \leq n$, then   $\mathfrak{p} \subseteq   \mathfrak{q}_1$.
\end{theorem}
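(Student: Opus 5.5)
The plan is to argue directly from upward closure of filters together with the irredundancy hypothesis. I expect the hypotheses that $\mathfrak{q}_1$ is an $S$-filter and that $\mathfrak{q}_i\cap S\neq\varnothing$ for $i\ge 2$ not to be needed at all.

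\emph{Case $n=1$.} The inclusion $\mathfrak{p}\subseteq\mathfrak{q}_1$ is just the hypothesis.

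\emph{Case $n\ge 2$.} Since $\mathfrak{q}_1$ cannot be eliminated from the union, we have $\mathfrak{p}\nsubseteq\bigcup_{i=2}^n\mathfrak{q}_i$. So we can choose $a\in\mathfrak{p}$ with $a\notin\mathfrak{q}_i$ for every $2\le i\le n$. Because $\mathfrak{p}\subseteq\bigcup_{i=1}^n\mathfrak{q}_i$, this forces $a\in\mathfrak{q}_1$.

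Now take an arbitrary $x\in\mathfrak{p}$. Since $\mathfrak{p}$ is a filter, $a\wedge x\in\mathfrak{p}$, so $a\wedge x\in\mathfrak{q}_j$ for some $j$.
\begin{itemize}
\item Suppose $j\ge 2$. From $a\wedge x\le a$ and upward closure of $\mathfrak{q}_j$ we would get $a\in\mathfrak{q}_j$, contradicting the choice of $a$.
\item Hence $j=1$. From $a\wedge x\le x$ and upward closure of $\mathfrak{q}_1$ we get $x\in\mathfrak{q}_1$.
\end{itemize}
Since $x\in\mathfrak{p}$ was arbitrary, $\mathfrak{p}\subseteq\mathfrak{q}_1$, as required.

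The key step is the meet trick $a\wedge x$. It replaces the ring-theoretic argument with sums and products, which has no lattice analogue, and it works because a filter contains every element above any of its members.

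The only point needing care is that this argument shows more than claimed. When $n\ge 2$, the conclusion $\mathfrak{p}\subseteq\mathfrak{q}_1$ means $\mathfrak{q}_2$ could in fact be eliminated from the union. So under the irredundancy hypothesis the case $n\ge 2$ cannot actually occur, and the theorem reduces to the case $n=1$. In both cases the conclusion holds, so the proof is complete either way.

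It is still worth recording, since the statement emphasizes it, that $S$ gives no additional information. Choosing $s_i\in\mathfrak{q}_i\cap S$ and setting $s=s_2\vee\dots\vee s_n\in S$ places $s$ in every $\mathfrak{q}_i$ with $i\ge 2$, and by Proposition \ref{1}(1) $s\notin\mathfrak{q}_1$. This is consistent with the argument above but is not needed for it.
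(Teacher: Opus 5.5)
Your proof is correct, and it is more general than the paper's. The paper begins the same way: it picks $x\in\mathfrak{p}\setminus\bigcup_{i=2}^n\mathfrak{q}_i$, which must lie in $\mathfrak{q}_1$, and uses the meet $x\wedge y$. However, it applies the meet argument only to $y\in\mathfrak{p}\cap\bigcap_{i=2}^n\mathfrak{q}_i$. That gives $\mathfrak{p}\cap\bigcap_{i=2}^n\mathfrak{q}_i\subseteq\mathfrak{q}_1$, which the paper rewrites as $\mathfrak{p}\vee\bigvee_{i=2}^n\mathfrak{q}_i\subseteq\mathfrak{q}_1$. It then takes an element of $S$ in $\bigvee_{i=2}^n\mathfrak{q}_i$ and applies Theorem \ref{2} to cancel that factor and conclude $\mathfrak{p}\subseteq\mathfrak{q}_1$. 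That element is exactly your $s=s_2\vee\cdots\vee s_n$, which the paper asserts exists without justification.

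You noticed that the meet argument never uses $y\in\bigcap_{i\ge 2}\mathfrak{q}_i$. So it applies to every element of $\mathfrak{p}$ and gives the conclusion at once, without the $S$-filter property, Theorem \ref{2}, or distributivity. Combined with discarding redundant members, your argument in effect shows that a filter contained in a finite union of filters lies in one of them. As you correctly point out, this means the irredundancy hypothesis cannot hold for $n\ge 2$, so the theorem only has content in the trivial case $n=1$.

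The paper's route is set up to mirror the ring-theoretic $S$-avoidance lemma, where the $S$-hypotheses genuinely matter. In the lattice setting they do no real work, and your argument makes that explicit.
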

 
\begin{proof}
Let $\mathfrak{p} \subseteq \bigcup_{i=1}^n \mathfrak{q}_i$ where  $\mathfrak {p},\mathfrak{q}_1,\ldots,  \mathfrak{q}_n$ are   filters of $\mathscr{L}$ but no $\mathfrak{q}_i$ can be eliminated from the union. Then, $\mathfrak{p} \nsubseteq \bigcup_{i=2}^n \mathfrak{q}_i$. Therefore, there exists $x \in  \mathfrak{p}$ such that $x \notin \bigcup_{i=2}^n \mathfrak{q}_i$. So, we have $x \in \mathfrak{q}_1$. Let $y \in \mathfrak{p} \cap \bigcap_{i=2}^n \mathfrak{q}_i$. It follows that $x \wedge y \in  \mathfrak {p}$. If $x \wedge y \in \bigcup_{i=2}^n \mathfrak{q}_i$, then $x \wedge y \in \mathfrak{q}_j$ for some $2 \leq j \leq n$ which means $x \in \mathfrak{q}_j$. Hence, we get  $x \in \bigcup_{i=2}^n \mathfrak{q}_i$ which is impossible. Then, $x \wedge y \notin \bigcup_{i=2}^n \mathfrak{q}_i$ and so $x \wedge y \in \mathfrak{q}_1$. Then, we have  $y \in \mathfrak{q}_1$. Hence, we conclude that $\mathfrak{q} \cap \bigcap_{i=2}^n \mathfrak{q}_i \subseteq \mathfrak{q}_1$ which means $\mathfrak{p} \vee \bigvee_{i=2}^n \mathfrak{q}_i \subseteq \mathfrak{q}_1$. By the hypothesis, we have  $S \cap \bigvee_{i=2}^n \mathfrak{q}_i \neq \varnothing$. This means that $\mathfrak{p} \subseteq \mathfrak{q}_1$ by Theorem \ref{2}.
\end{proof}
\begin{proposition} \label{intersection}
Let $S \subseteq \mathscr{L}$ be   $\vee$-closed and $\{\mathfrak{q}_i\}_{i\in \Delta}$ be a nonempty set of the  $S$-filters of $\mathscr{L}$. Then, $\bigcap_{i\in \Delta} \mathfrak{q}_i$ is an $S$-filters of $\mathscr{L}$.
\end{proposition}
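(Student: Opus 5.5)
Set $\mathfrak{q}=\bigcap_{i\in \Delta}\mathfrak{q}_i$. I will check three things in order: $\mathfrak{q}$ is a filter, $\mathfrak{q}$ is proper, and $\mathfrak{q}$ has the $S$-filter property. Each check works directly from the definitions.

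First, $\mathfrak{q}$ is a filter. Every $\mathfrak{q}_i$ contains $1$, so $1\in\mathfrak{q}$ and $\mathfrak{q}$ is non-empty. If $u,v\in\mathfrak{q}$, then $u\wedge v\in\mathfrak{q}_i$ for every $i$, hence $u\wedge v\in\mathfrak{q}$. If $u\in\mathfrak{q}$ and $u\le w$, then $w\in\mathfrak{q}_i$ for every $i$, hence $w\in\mathfrak{q}$.

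Second, $\mathfrak{q}$ is proper. Since $\Delta\neq\varnothing$, pick some $j\in\Delta$. Then $\mathfrak{q}\subseteq\mathfrak{q}_j\subsetneq\mathscr{L}$. Equivalently, $0\notin\mathfrak{q}_j$, so $0\notin\mathfrak{q}$. This is the only place where nonemptiness of the family is used; without it the empty intersection would be $\mathscr{L}$, which is not proper.

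Third, the $S$-filter property. Let $u,v\in\mathscr{L}$ with $u\vee v\in\mathfrak{q}$ and $u\in S$. For each $i\in\Delta$ we have $u\vee v\in\mathfrak{q}_i$, and $\mathfrak{q}_i$ is an $S$-filter, so $v\in\mathfrak{q}_i$. Hence $v\in\mathfrak{q}$.

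An alternative route for the third step is Theorem \ref{ghasem}. One has $\mathfrak{q}\subseteq\mathfrak{q}_S$ because $0\in S$. Conversely, $\mathfrak{q}_S\subseteq(\mathfrak{q}_i)_S=\mathfrak{q}_i$ for each $i$, so $\mathfrak{q}_S=\mathfrak{q}$. The direct argument above is simpler, and I will use it.

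None of these steps presents a real obstacle. The only point needing care is properness, which is why the hypothesis that the family is nonempty is needed.
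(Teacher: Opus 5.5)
Your proof is correct and matches the paper's argument. The paper's proof is exactly your third step, the direct check of the $S$-filter property componentwise; it leaves implicit the verification that $\bigcap_{i\in\Delta}\mathfrak{q}_i$ is a filter and is proper, which you supply using $\Delta\neq\varnothing$.
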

\begin{proof}
Let for $u,v \in \mathscr{L}$, $u \vee v \in \bigcap_{i\in \Delta} \mathfrak{q}_i$  and $u \in S$. Since $\mathfrak{q}_i$ is an $S$-filters of $\mathscr{L}$ for all $i\in \Delta$ and $u \vee v \in \mathfrak{q}_i$, we obtain $v \in \mathfrak{q}_i$ because $u \in S$. So, $v \in \bigcap_{i\in \Delta}\mathfrak{q}_i$. This implies that  $\bigcap_{i\in \Delta} \mathfrak{q}_i$ is an $S$-filters of $\mathscr{L}$.
\end{proof}
\begin{theorem} \label{khamen}
Let $S \subseteq \mathscr{L}$ be   $\vee$-closed and  $F$ be  a filter of $\mathscr{L}$  such that $F \cap S= \varnothing$. Then there exists an $S$-filter $\mathfrak{q}$ of $\mathscr{L}$ such that $F \subseteq \mathfrak{q}$.
\end{theorem}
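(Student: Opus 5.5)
The natural candidate is $\mathfrak{q}=F_S$, the set $\{a \in \mathscr{L} \mid a \vee t \in F \text{ for some } t \in S\}$ introduced before Theorem \ref{small}. Since $F$ is a filter with $F \cap S=\varnothing$, Theorem \ref{small} applies directly: $F_S$ is the smallest $S$-filter of $\mathscr{L}$ containing $F$. In particular $F_S$ is an $S$-filter with $F \subseteq F_S$, which is exactly what the statement asks for.

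If I were to avoid quoting Theorem \ref{small}, I would verify the required facts directly, in this order:
\begin{enumerate}
\item $F \subseteq F_S$, because $a \vee 0 = a$ and $0 \in S$.
\item $F_S$ is a filter. Use the distributive identity $(a \wedge b)\vee(t_a\vee t_b)=(a\vee t_a\vee t_b)\wedge(b\vee t_a\vee t_b)$ together with the fact that $S$ is $\vee$-closed.
\item The $S$-property. If $u \vee v \in F_S$ with $u\in S$, then $v\vee(u\vee t)\in F$ for some $t\in S$, and $u\vee t\in S$, so $v\in F_S$.
\end{enumerate}

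The one point that needs genuine care, and which I expect to be the main obstacle, is \emph{properness} of $F_S$. The definition of an $S$-filter requires the filter to be proper. So I must show $0\notin F_S$. Suppose $0\in F_S$. Then $0\vee t=t\in F$ for some $t\in S$, which contradicts $F\cap S=\varnothing$. Hence $F_S\neq\mathscr{L}$. This is precisely where the hypothesis $F\cap S=\varnothing$ is used. It is also necessary: by Proposition \ref{1}(1), any $S$-filter containing $F$ is disjoint from $S$, so $F$ must be as well.
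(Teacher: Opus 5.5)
Your proof is correct, but it takes a different route from the paper. The paper argues non-constructively. It applies Zorn's Lemma to the family of filters that contain $F$ and are disjoint from $S$. It then uses an external result (Lemma 2.12 of Atani) to conclude that a maximal member $\mathfrak{q}$ of this family is prime, and finishes with Remark \ref{0}.

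You instead observe that the statement is essentially a corollary of Theorem \ref{small}, taking $\mathfrak{q}=F_S$. Your approach avoids the axiom of choice, is self-contained, and produces the \emph{smallest} $S$-filter containing $F$. Your explicit check that $0\notin F_S$ is worth keeping. The paper's proof of Theorem \ref{small} never verifies that $\mathfrak{p}_S$ is proper, and this is exactly where the hypothesis $F\cap S=\varnothing$ is used.

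In exchange, the paper's approach yields an $S$-filter with more structure. It is prime. Also, every $S$-filter is disjoint from $S$ (Proposition \ref{1}(1)), so it is not properly contained in any other $S$-filter, i.e.\ it is a maximal $S$-filter.

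Both arguments need distributivity. Yours uses it in the identity $(a\wedge b)\vee(t_a\vee t_b)=(a\vee t_a\vee t_b)\wedge(b\vee t_a\vee t_b)$; the paper's uses it to get primeness of the Zorn-maximal filter.
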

\begin{proof}
Assume that  $S \subseteq \mathscr{L}$ is    $\vee$-closed  and $F$ is a filter of $\mathscr{L}$  such that $F \cap S= \varnothing$. Put $\Sigma = \{\mathfrak{p} \in \mathcal{F}(\mathscr{L}) \mid F \subseteq \mathfrak{p} \text{ and } S \cap \mathfrak{p}  = \emptyset\}$. Since $F \in \Sigma$,  we have $\Sigma \neq \emptyset$.  Since $(\Sigma, \subseteq)$ is a partial order, there is a filter 
$\mathfrak{q}$ which is maximal in
$\Sigma$ by Zorn's Lemma. By the proof of Lemma 2.12 in \cite{Atani2}, $\mathfrak{q}$ is prime. Since $\mathfrak{q}$ is a prime filter disjoint with $S$, we conclude that $\mathfrak{q}$ is an $S$-filter of $\mathscr{L}$ by Remark \ref{0}.
\end{proof}
Let  $F$ be a filter of $\mathscr{L}$. By  $\min(F)$, we mean  the set of all prime filters of $\mathscr{L}$ that are minimal over $F$. Moreover, an $S$-fiter $F$ of $\mathscr{L}$ is a maximal $S$-fiter if it is not properly contained in any other $S$-fiter.
\begin{theorem}
Let $S \subseteq \mathscr{L}$  be   $\vee$-closed.    Then the following hold:

\begin{enumerate} 
\item[(i)]
If $\mathfrak{q}$ is an $S$-filter of $\mathscr{L}$ and $\mathfrak{p} \in \min(\mathfrak{q})$, then $\mathfrak{p}$ is an $S$-filter of $\mathscr{L}$.
\item[(ii)] If $\mathfrak{q}$ is a maximal $S$-filter of $\mathscr{L}$, then $\mathfrak{q}$ is a prime filter of $\mathscr{L}$.
\end{enumerate} 
\end{theorem}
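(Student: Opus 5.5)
For part (i), the plan is to use Remark \ref{0}. Any $\mathfrak{p}\in\min(\mathfrak{q})$ is prime, so it suffices to show $\mathfrak{p}\cap S=\varnothing$. For this I would first establish the lattice analogue of the classical ring-theoretic description of minimal primes over an ideal:
$$\mathfrak{p}=\{x\in\mathscr{L}\mid x\vee y\in\mathfrak{q}\ \text{for some } y\in\mathscr{L}\setminus\mathfrak{p}\}.$$
Call the right-hand side $T$. This step uses that $\mathscr{L}$ is distributive, which is the standing hypothesis of the paper.

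The inclusion $T\subseteq\mathfrak{p}$ is immediate. If $x\vee y\in\mathfrak{q}\subseteq\mathfrak{p}$ with $y\notin\mathfrak{p}$, then primeness of $\mathfrak{p}$ forces $x\in\mathfrak{p}$.

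For the reverse inclusion, suppose $x\in\mathfrak{p}\setminus T$ and consider
$$S'=(\mathscr{L}\setminus\mathfrak{p})\cup\{x\vee y\mid y\in\mathscr{L}\setminus\mathfrak{p}\}.$$
\begin{itemize}
\item $S'$ is $\vee$-closed. Indeed, $\mathscr{L}\setminus\mathfrak{p}$ is $\vee$-closed and contains $0$ because $\mathfrak{p}$ is a proper prime filter, and $(x\vee y)\vee(x\vee y')=x\vee(y\vee y')$.
\item $S'$ is disjoint from $\mathfrak{q}$. We have $\mathfrak{q}\subseteq\mathfrak{p}$, and $x\vee y\in\mathfrak{q}$ would put $x$ in $T$.
\end{itemize}
Now run the Zorn's lemma argument from the proof of Theorem \ref{khamen}, using Lemma 2.12 of \cite{Atani2}. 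This produces a prime filter $\mathfrak{p}'\supseteq\mathfrak{q}$ with $\mathfrak{p}'\cap S'=\varnothing$. Then $\mathfrak{p}'\subseteq\mathfrak{p}$, since $\mathfrak{p}'$ misses $\mathscr{L}\setminus\mathfrak{p}$. Also $x\notin\mathfrak{p}'$: otherwise $x\vee 0=x\in S'$ would lie in $\mathfrak{p}'$, using $0\notin\mathfrak{p}$. So $\mathfrak{p}'\subsetneq\mathfrak{p}$, contradicting the minimality of $\mathfrak{p}$ over $\mathfrak{q}$. Hence $T=\mathfrak{p}$.

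With this description the conclusion is quick. If $t\in\mathfrak{p}\cap S$, choose $y\notin\mathfrak{p}$ with $t\vee y\in\mathfrak{q}$. Since $\mathfrak{q}$ is an $S$-filter and $t\in S$, we get $y\in\mathfrak{q}\subseteq\mathfrak{p}$, a contradiction. So $\mathfrak{p}\cap S=\varnothing$, and Remark \ref{0} shows that $\mathfrak{p}$ is an $S$-filter. I expect the main obstacle to be the characterization of minimal primes, in particular checking that the cited Zorn argument applies to an arbitrary $\vee$-closed set $S'$ and not just to $S$.

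For part (ii), let $\mathfrak{q}$ be a maximal $S$-filter. By Proposition \ref{1}(1), $\mathfrak{q}\cap S=\varnothing$. The proof of Theorem \ref{khamen}, applied with $F=\mathfrak{q}$, gives a prime filter $\mathfrak{q}'\supseteq\mathfrak{q}$ that is maximal among filters disjoint from $S$. By Remark \ref{0}, $\mathfrak{q}'$ is an $S$-filter, so maximality of $\mathfrak{q}$ forces $\mathfrak{q}=\mathfrak{q}'$. Therefore $\mathfrak{q}$ is prime.
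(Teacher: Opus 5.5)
Your proof is correct. Part (i) rests on the same key fact as the paper's proof: $x\in\mathfrak{p}$ if and only if $x\vee w\in\mathfrak{q}$ for some $w\notin\mathfrak{p}$. The paper simply cites this (Proposition 2.7 of \cite{Atani1}) and checks the $S$-filter condition directly. From $u\vee v\in\mathfrak{p}$ and $u\in S$ it gets $u\vee(v\vee w)\in\mathfrak{q}$, hence $v\vee w\in\mathfrak{q}\subseteq\mathfrak{p}$, hence $v\in\mathfrak{p}$. You instead prove the characterization yourself and finish through $\mathfrak{p}\cap S=\varnothing$ and Remark \ref{0}. Your stated worry is unfounded. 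The argument that a filter maximal among those avoiding a set is prime uses only that the set is $\vee$-closed and that $\mathscr{L}$ is distributive, so it applies verbatim to your $S'$.

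Part (ii) is where you genuinely diverge. The paper takes $u\vee v\in\mathfrak{q}$ with $u\notin\mathfrak{q}$. By Proposition \ref{1}(2), $(\mathfrak{q}:_{\mathscr{L}}u)=\{x\mid x\vee u\in\mathfrak{q}\}$ is an $S$-filter; it is proper precisely because $u\notin\mathfrak{q}$, and it contains both $\mathfrak{q}$ and $v$. Maximality then gives $v\in\mathfrak{q}$. This argument is elementary and avoids Zorn's lemma. Your route passes through Theorem \ref{khamen} and so relies on choice, but it shows more: a maximal $S$-filter is maximal among all filters disjoint from $S$. The converse is easy, so the two notions coincide.
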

\begin{proof}
(i) Let     for $u,v \in \mathscr{L}$, $u \vee v \in \mathfrak{p}$ and $u \in  S$. Then, there exists $w \notin \mathfrak{p}$ such that $u \vee v \vee w \in \mathfrak{q}$ by Proposition 2.7 in \cite{Atani1}. Since $\mathfrak{q}$ is an $S$-filter of $\mathscr{L}$ and $u \in S$,  we get $v \vee w \in \mathfrak{q} \subseteq  \mathfrak{p}$. This means  that $v \in \mathfrak{p}$, as $P$ is a prime filter and $w \notin \mathfrak{p}$. Consequently, $\mathfrak{p}$ is  an $S$-filter of $\mathscr{L}$.

(ii) Assume that $\mathfrak{q}$ is a maximal $S$-filter of $\mathscr{L}$. Let    $u \vee v \in \mathfrak{q}$ for $u,v \in \mathscr{L}$  but $u \notin  \mathfrak{q}$. Therefore, $(\mathfrak{q} : \{u\})$ is an $S$-filter of $\mathscr{L}$ by Proposition \ref{1} (2). So, we have $\mathfrak{q} \subseteq (\mathfrak{q} :_{\mathscr{L}} \{u\})$. By maximality of $\mathfrak{q}$, we obtain $v \in (\mathfrak{q} :_{\mathscr{L}} \{u\})=\mathfrak{q}$. Thus, $\mathfrak{q}$ is a prime filter of $\mathscr{L}$.
\end{proof}
Let $S$ and $ S^{\prime}$ be two subsets of $\mathscr{L}$.  $S^{\prime}$ is called    $S$-$\vee$-closed  if $S^{\prime}$ contains at least an element $t \neq 0$ from $S$ and $t \vee t^{\prime} \in S^{\prime}$ for all $t \in S \cap S^{\prime}$ and $t^{\prime} \in S^{\prime}$. Moreover, $S^{\prime}$ is said to be $S$-complete  if $S^{\prime}$ is $S$-$\vee$-closed and $x \vee y \in S^{\prime}$ for all $x,y \in \mathscr{L}$ implies $x \in S^{\prime}$ and $y \in S^{\prime}$.
\begin{proposition} \label{complete}
Let $\{\mathfrak{q}_i\}_{i \in I}$ be a family of  $S$-filters of $\mathscr{L}$. Then, $S^{\prime}=\mathscr{L} \setminus \bigcup_{i \in I} \mathfrak{q}_i$ is $S$-complete.
\end{proposition}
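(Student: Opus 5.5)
The plan is to verify the three clauses of $S$-completeness directly from the definitions. The only earlier results I need are Proposition \ref{1}(1), which says $\mathfrak{q}_i \cap S=\varnothing$ for each $i$, and the defining property of an $S$-filter. Throughout, $S'=\mathscr{L}\setminus \bigcup_{i\in I}\mathfrak{q}_i$.

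\emph{Step 1: $S'$ contains a nonzero element of $S$.} By Proposition \ref{1}(1), each $\mathfrak{q}_i$ is disjoint from $S$, so $S\subseteq S'$. Therefore any $t\in S$ with $t\neq 0$ lies in $S'$. This is the step I expect to be the main obstacle, and it is a gap in the hypotheses rather than a technical difficulty. Nothing in the statement guarantees that $S$ has a nonzero element: if $S=\{0\}$, the clause ``$S'$ contains some $t\neq 0$ from $S$'' fails. I would therefore make explicit the tacit assumption $S\neq\{0\}$. Under that assumption the step is immediate from $S\subseteq S'$.

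\emph{Step 2: the $S$-$\vee$-closed condition.} Let $t\in S\cap S'$ and $t'\in S'$, and suppose, for a contradiction, that $t\vee t'\notin S'$. Then $t\vee t'\in\mathfrak{q}_j$ for some $j$. Since $\mathfrak{q}_j$ is an $S$-filter and $t\in S$, the definition gives $t'\in\mathfrak{q}_j$. This contradicts $t'\in S'$, so $t\vee t'\in S'$. Only $t\in S$ is used here; membership of $t$ in $S'$ is not needed.

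\emph{Step 3: completeness.} Let $x\vee y\in S'$, and suppose $x\notin S'$, so that $x\in\mathfrak{q}_j$ for some $j$. Since $x\le x\vee y$ and filters are upward closed, $x\vee y\in\mathfrak{q}_j$, contradicting $x\vee y\in S'$. Hence $x\in S'$, and by symmetry $y\in S'$.

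If $I=\varnothing$ then $S'=\mathscr{L}$, and all three clauses hold trivially (given $S\neq\{0\}$ for the first). Steps 1--3 together show that $S'$ is $S$-complete.
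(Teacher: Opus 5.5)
Your proof is correct and follows the paper's argument step for step: you get $S\subseteq S'$ from Proposition~\ref{1}(1), apply the $S$-filter property to $t\vee t'$ for the $S$-$\vee$-closed clause, and use upward closure of filters for completeness. Your caveat in Step~1 is also well founded: the paper's proof never checks the clause requiring a nonzero element of $S$ in $S'$, and the statement genuinely fails for $S=\{0\}$, because then every proper filter is an $S$-filter while no $S'$ can contain a nonzero element of $S$. So the tacit hypothesis $S\neq\{0\}$ is needed.
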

\begin{proof}
Assume that  $\{\mathfrak{q}_i\}_{i \in I}$ are a family of $S$-filters of $\mathscr{L}$. Then, $\mathfrak{q}_i \cap S= \varnothing$ for all $i \in I$ by Proposition \ref{1} (1). This implies that $S \subseteq S^{\prime}$. Take any $t \in S \cap S^{\prime}$ and $t^{\prime} \in  S^{\prime}$. This means that $t, t^{\prime} \notin  \mathfrak{q}_i$ for all $i \in I$. Since $\mathfrak{q}_i$ is an $S$-filter for each $i \in I$, we get $t \vee t^{\prime} \notin \mathfrak{q}_i$. Therefore, we have $t \vee t^{\prime} \in S^{\prime}$ which implies $S^{\prime}$ is $S$-$\vee$-closed. Now, let $x \vee y \in S^{\prime}$ for   $x,y \in \mathscr{L}$. So, $x \vee y \notin \mathfrak{q}_i$ for every $i \in I$. It follows that for all $1 \leq i \leq t$ neither $x \in \mathfrak{q}_i$ nor $y \in \mathfrak{q}_i$. Hence, we get $x \in S^{\prime}$ and $y \in S^{\prime}$. Thus, $S^{\prime}=\mathscr{L} \setminus \bigcup_{i \in I}  \mathfrak{q}_i$ is $S$-complete.
\end{proof}
\begin{theorem}
Let $S$ and $ S^{\prime}$ be two subsets of $\mathscr{L}$. Then, $S^{\prime}$ is $S$-complete containing $S$ if and only if $S^{\prime}=\mathscr{L} \setminus \bigcup_{\mathfrak{q} \in \mathscr{P}}  \mathfrak{q}$ where  $\mathscr{P}$ is a family of  $S$-filters of $\mathscr{L}$.
\end{theorem}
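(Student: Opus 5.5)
The plan is to prove the two implications separately. The reverse direction follows almost entirely from earlier results. The forward direction needs a construction: for each element outside $S'$, an $S$-filter that contains it and avoids $S'$. For that I will use the smallest $S$-filter $\mathfrak{p}_S$ from Theorem \ref{small}, applied to a principal filter.

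\emph{($\Leftarrow$)} Suppose $S'=\mathscr{L}\setminus\bigcup_{\mathfrak{q}\in\mathscr{P}}\mathfrak{q}$ for a family $\mathscr{P}$ of $S$-filters. By Proposition \ref{complete}, $S'$ is $S$-complete. By Proposition \ref{1}(1), every $\mathfrak{q}\in\mathscr{P}$ satisfies $\mathfrak{q}\cap S=\varnothing$, so $S\subseteq S'$.

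\emph{($\Rightarrow$)} Let $S'$ be $S$-complete with $S\subseteq S'$. I will show that $S'=\mathscr{L}\setminus\bigcup_{x\notin S'}\mathfrak{q}_x$ for suitable $S$-filters $\mathfrak{q}_x$.

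First, $\mathscr{L}\setminus S'$ is an up-set. Indeed, if $x\notin S'$ and $x\le y$ but $y\in S'$, then $x\vee y=y\in S'$, and the completeness condition forces $x\in S'$, a contradiction.

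Now fix $x\notin S'$ and let $F=T(\{x\})=\{y\mid x\le y\}$. By the up-set property, $F\subseteq\mathscr{L}\setminus S'$, so $F\cap S=\varnothing$ because $S\subseteq S'$. By Theorem \ref{small}, $\mathfrak{q}_x:=F_S$ is an $S$-filter containing $F$, and in particular $x\in\mathfrak{q}_x$.

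The key step, which I expect to be the only real obstacle, is to show $\mathfrak{q}_x\cap S'=\varnothing$. Suppose $a\in\mathfrak{q}_x\cap S'$. Then $a\vee t\in F$ for some $t\in S$. Since $t\in S\subseteq S'$, we have $t\in S\cap S'$, and the $S$-$\vee$-closed condition gives $t\vee a\in S'$. But $a\vee t\in F\subseteq\mathscr{L}\setminus S'$, a contradiction. This is exactly where both halves of $S$-completeness are used: the $\vee$-closure condition here, and the splitting condition in the up-set step.

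With this in hand, put $\mathscr{P}=\{\mathfrak{q}_x\mid x\in\mathscr{L}\setminus S'\}$. Each $\mathfrak{q}_x$ lies in $\mathscr{L}\setminus S'$, and each $x\notin S'$ lies in its own $\mathfrak{q}_x$. Hence $\bigcup_{\mathfrak{q}\in\mathscr{P}}\mathfrak{q}=\mathscr{L}\setminus S'$, which is the required equality. If $S'=\mathscr{L}$, take $\mathscr{P}=\varnothing$.
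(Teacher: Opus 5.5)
Your proof is correct and follows essentially the same route as the paper. The splitting half of $S$-completeness makes $T(\{x\})$ avoid $S'$ (hence $S$) for $x\notin S'$, Theorem \ref{small} then gives the $S$-filter $T(\{x\})_S$ containing $x$, and the reverse direction is Proposition \ref{complete}; your explicit check that $T(\{x\})_S\cap S'=\varnothing$ via the $S$-$\vee$-closed condition is a step the paper's argument needs (to place $T(\{x\})_S$ in its family of $S$-filters disjoint from $S'$) but omits, so your version is the more complete one.
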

\begin{proof}
$\Longrightarrow$ Let $S^{\prime}$ be  $S$-complete containing $S$. Assume that $\mathscr{P}$ is the set of all  $S$-filters of $\mathscr{L}$ disjoint $S^{\prime}$. It is obvious that $S^{\prime} \subseteq \mathscr{L} \setminus \bigcup_{\mathfrak{q} \in \mathscr{P}}  \mathfrak{q}$. Now, suppose that  $a \in \mathscr{L} \setminus \bigcup_{\mathfrak{q} \in \mathscr{P}}  \mathfrak{q}$ such that $a \notin S^{\prime}$. Let us assume that $T(\{a\}) \cap S^{\prime} \neq \varnothing$. Then, we have   $a \vee x \in S^{\prime}$ for some $ x \in \mathscr{L}$. Since $S^{\prime}$ is $S$-complete, $a \in S^{\prime}$ which is impossibe. Therefore,  $T(\{a\}) \cap S^{\prime} = \varnothing$ which implies $T(\{a\}) \cap S=\varnothing$. Hence, $T(\{a\})_S$ is an $S$-filter of $\mathscr{L}$ by Theorem \ref{ghasem}. Therefore, $a \in T(\{a\}) \subseteq \bigcup_{\mathfrak{q} \in \mathscr{P}}  \mathfrak{q}$ which  contradicts the  fact that $a \in \mathscr{L} \setminus \bigcup_{\mathfrak{q} \in \mathscr{P}}  \mathfrak{q}$. Then, we conclude that $a \in S^{\prime}$ and so $S^{\prime}=\mathscr{L} \setminus \bigcup_{\mathfrak{q} \in \mathscr{P}}  \mathfrak{q}$ where  $\mathscr{P}$ is a family of  $S$-filters of $\mathscr{L}$.

$\Longleftarrow$ It follows from Proposition \ref{complete}.
\end{proof}
Let $\mathscr{L}_1$ and $\mathscr{L}_2$ be lattices. A lattice homomorphism $\psi :\mathscr{L}_1 \rightarrow \mathscr{L}_2$ is a map from $\mathscr{L}_1$ to $\mathscr{L}_2$ satisfying $\psi(u \vee v) = \psi(u) \vee \psi(v)$ and $\psi(u \wedge v) = \psi(u) \wedge \psi(v)$ for all $u, v \in \mathscr{L}_1$  \cite{Birkhoff}. Moreover, if $\psi(1)=1$, then the set $\operatorname{Ker}(\psi)=\{u \in  \mathscr{L} \mid \psi(u)=1\}$ is a filter of $\mathscr{L}$ by Lemma 3.13 in \cite{Atani2}.
\begin{theorem} \label{homo}
Let $S \subseteq \mathscr{L}$  be   $\vee$-closed and $\psi :\mathscr{L}_1 \rightarrow \mathscr{L}_2$ be a lattice homomorphism such that $\psi(1) = 1$. The following hold:
 \begin{enumerate}
\item If $\mathfrak{q}_2$ is a $\psi(S)$-filter of  $\mathscr{L}_2$, then  $\psi^{-1}(\mathfrak{q}_2)$ is an $S$-filter of $\mathscr{L}_1$.
\item If $\mathscr{L}_1$ is   complemented, $\psi$ is onto, and $\mathfrak{q}_1$ is an $S$-filter of $\mathscr{L}_1$ containing $\operatorname{Ker}(\psi)$, then $f(\mathfrak{q}_1)$ is an $\psi(S)$-filter of $\mathscr{L}_2$.
 \end{enumerate}
\end{theorem}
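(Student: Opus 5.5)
For part (1), the plan is a direct check of the definition. First, $\psi^{-1}(\mathfrak{q}_2)$ is a filter.
\begin{itemize}
\item It contains $1$, since $\psi(1)=1\in\mathfrak{q}_2$.
\item It is closed under meets, since $\psi(u\wedge v)=\psi(u)\wedge\psi(v)$.
\item It is upward closed, since $u\le w$ gives $\psi(u)=\psi(u\wedge w)=\psi(u)\wedge\psi(w)\le\psi(w)$, so $\psi$ is monotone.
\end{itemize}
It is proper: otherwise $0\in\psi^{-1}(\mathfrak{q}_2)$, so $\psi(0)\in\mathfrak{q}_2$. But $0\in S$, so $\psi(0)\in\psi(S)\cap\mathfrak{q}_2$, which contradicts Proposition \ref{1}(1) applied to $\mathfrak{q}_2$.

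For the $S$-property, let $u\vee v\in\psi^{-1}(\mathfrak{q}_2)$ with $u\in S$. Then $\psi(u)\vee\psi(v)\in\mathfrak{q}_2$ and $\psi(u)\in\psi(S)$. Since $\mathfrak{q}_2$ is a $\psi(S)$-filter, $\psi(v)\in\mathfrak{q}_2$, that is, $v\in\psi^{-1}(\mathfrak{q}_2)$. We should also note that $\psi(S)$ is $\vee$-closed, because $\psi(s_1)\vee\psi(s_2)=\psi(s_1\vee s_2)$. It contains $\psi(0)$, which is the least element of $\psi(\mathscr{L}_1)$, and it is the least element of $\mathscr{L}_2$ when $\psi$ is onto. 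I will add a remark that the notion of $\psi(S)$-filter is read with this convention.

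For part (2), I read $f$ as $\psi$. The key step is to prove $\psi^{-1}(\psi(\mathfrak{q}_1))=\mathfrak{q}_1$, using $\operatorname{Ker}(\psi)\subseteq\mathfrak{q}_1$ and complementation. Suppose $\psi(a)=\psi(b)$ with $b\in\mathfrak{q}_1$, and let $b'$ be a complement of $b$. Then
\[
\psi(a\vee b')=\psi(b\vee b')=\psi(1)=1,
\]
so $a\vee b'\in\operatorname{Ker}(\psi)\subseteq\mathfrak{q}_1$. Hence $(a\vee b')\wedge b\in\mathfrak{q}_1$. By distributivity, which is assumed throughout the paper,
\[
(a\vee b')\wedge b=(a\wedge b)\vee(b'\wedge b)=a\wedge b\le a,
\]
so $a\in\mathfrak{q}_1$. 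I expect this to be the main obstacle, since it is the only place where both the complemented and distributive hypotheses are used.

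With the key step in hand, the rest goes as follows.
\begin{itemize}
\item $\psi(\mathfrak{q}_1)$ is a filter. It is nonempty. For meets, $\psi(a)\wedge\psi(b)=\psi(a\wedge b)$. For upward closure, if $\psi(a)\le y=\psi(c)$ (using surjectivity), then $\psi(a\vee c)=y$ and $a\vee c\in\mathfrak{q}_1$.
\item It is proper. If $0=\psi(0)\in\psi(\mathfrak{q}_1)$, the key step gives $0\in\mathfrak{q}_1$, which is impossible.
\item It is a $\psi(S)$-filter. Let $x\vee y\in\psi(\mathfrak{q}_1)$ with $x=\psi(u)$, $u\in S$, and write $y=\psi(v)$ by surjectivity. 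Then $\psi(u\vee v)\in\psi(\mathfrak{q}_1)$, so $u\vee v\in\mathfrak{q}_1$ by the key step. Since $\mathfrak{q}_1$ is an $S$-filter, $v\in\mathfrak{q}_1$, and therefore $y\in\psi(\mathfrak{q}_1)$.
\end{itemize}
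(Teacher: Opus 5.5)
Your proposal is correct and follows essentially the paper's argument. In part (2) the paper also takes a complement $b$ of the element $a\in\mathfrak{q}_1$ with $\psi(a)=\psi(u_1\vee v_1)$, puts $u_1\vee v_1\vee b$ into $\operatorname{Ker}(\psi)\subseteq\mathfrak{q}_1$, and uses distributivity to get $u_1\vee v_1\in\mathfrak{q}_1$. It writes this as $x=(x\vee a)\wedge(x\vee b)$ rather than your $(x\vee b')\wedge b=x\wedge b\le x$; this is your saturation step $\psi^{-1}(\psi(\mathfrak{q}_1))=\mathfrak{q}_1$ used once, without stating it as a lemma.

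Your extra checks go beyond the paper's proof, which passes over them:
\begin{itemize}
\item you verify that $\psi^{-1}(\mathfrak{q}_2)$ and $\psi(\mathfrak{q}_1)$ are proper filters;
\item you note that $\psi(S)$ need not contain the least element of $\mathscr{L}_2$ unless $\psi$ is onto.
\end{itemize}
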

\begin{proof}
(1) Let for $u,v \in \mathscr{L}_1$, $u \vee v \in \psi^{-1}(\mathfrak{q}_2)$  and $u \in S$. Therefore, we have $\psi(u \vee v)=\psi(u) \vee \psi(v) \in \mathfrak{q}_2$ with $\psi(u) \in \psi(S)$. Since $\mathfrak{q}_2$ is a $\psi(S)$-filter of  $\mathscr{L}_2$, we get $\psi(v) \in \mathfrak{q}_2$ which means $v \in \psi^{-1}(\mathfrak{q}_2)$. Hence, $\psi^{-1}(\mathfrak{q}_2)$ is an $S$-filter of $\mathscr{L}_1$.

(2) Let for $u_2,v_2 \in \mathscr{L}_2$, $u_2 \vee v_2 \in \psi(\mathfrak{q}_1)$  and $u_2 \in \psi(S)$. Then, there exists $u_1 \in S$ such that $\psi(u_1)=u_2$. Since $\psi$ is onto, there exist $v_1 \in \mathscr{L}_1$ with $\psi(v_1)=v_2$. Hence, $u_2 \vee v_2=\psi(u_1) \vee \psi(v_1)=\psi(u_1 \vee v_1) \in \psi(\mathfrak{q}_1) $. This imples that $\psi(u_1 \vee v_1)=\psi(a)$ for some $a \in \mathfrak{q}_1$. Since $\mathscr{L}_1$ is   complemented, there exists $b \in \mathscr{L}_1$ such that $a \vee b =1$ and $a \wedge b =0$. Since $\psi(u_1 \vee v_1 \vee b)=\psi(u_1 \vee v_1) \vee \psi(b)=1$, we get $u_1 \vee v_1 \vee b \in \operatorname{Ker}(\psi) \subseteq  \mathfrak{q}_1$. Since $\mathfrak{q}_1$ is a filter of $\mathscr{L}_1$, we have $u_1 \vee v_1=(u_1 \vee v_1) \vee (a \wedge b)=(u_1 \vee v_1 \vee a) \wedge (u_1 \vee v_1 \vee b) \in \mathfrak{q}_1$. Since $\mathfrak{q}_1$ is an $S$-filter of $\mathscr{L}_1$ and $u_1 \in S$, we obtain $v_1 \in \mathfrak{q}_1$ which means $v_2=\psi(v_1)  \in \psi(\mathfrak{q}_1)$. Consequently, $\psi(\mathfrak{q}_1)$ is an $f(S)$-filter of $\mathscr{L}_2$.
\end{proof}
Let  $\mathfrak{p}$ be  a filter of a lattice ($\mathscr{L}$, $\le$). Let  us define   the  relation $\sim$ on $\mathscr{L}$ as  $u \sim v$ if and only if there exist $x, y \in \mathfrak{p}$ satisfying $u \wedge x = v \wedge y$. In this case,  $\sim$ is an equivalence relation on $\mathscr{L}$.  Assume that $u \wedge \mathfrak{p}$ is the equivalence class of $u$   and $\frac{\mathscr{L}}{\mathfrak{p}}$ is the collection of all equivalence classes. Now, consider the partial order $\le_Q$ on $\frac{\mathscr{L}}{\mathfrak{p}}$ as follows: for each $u \wedge \mathfrak{p}, v \wedge \mathfrak{p} \in \frac{\mathscr{L}}{\mathfrak{p}}$,  $u \wedge \mathfrak{p} \le_Q v \wedge \mathfrak{p}$ if and only if $u \le v$. Then $(\frac{\mathscr{L}}{\mathfrak{p}}$, $\le_Q)$ is a lattice with $(u \wedge \mathfrak{p} ) \vee_Q (v \wedge \mathfrak{p} ) = (u \vee v) \wedge \mathfrak{p}$ and $(u \wedge \mathfrak{p} ) \wedge_Q (v \wedge \mathfrak{p} ) = (u \wedge v) \wedge \mathfrak{p}$ for all elements $u \wedge \mathfrak{p}, v \wedge \mathfrak{p} \in \frac{\mathscr{L}}{\mathfrak{p}}$. Note that $u \wedge \mathfrak{p} = \mathfrak{p}$ if and only if $u \in \mathfrak{p}$ \cite{Atani2}. Let $S \subseteq \mathscr{L}$ is $\vee$-closed. If we denote   the equivalence class of $u$ in $\frac{\mathscr{L}}{\mathfrak{p}}$ by $\bar{u}$, then $\bar{S}=\{\bar{s}  \mid s \in S\} \subseteq \frac{\mathscr{L}}{\mathfrak{p}}$ is $\vee$-closed. 
\begin{corollary}
Let $S \subseteq \mathscr{L}$  be   $\vee$-closed and let $\mathfrak{p}$ and $ \mathfrak{q}$ be filters of $\mathscr{L}$ such that $\mathfrak{p} \subseteq \mathfrak{q}$. If $ \mathfrak{q}$ is an $S$-filter of $\mathscr{L}$, then $\frac{\mathfrak{q}}{\mathfrak{p}}$ is an $\bar{S}$-filter of $\frac{\mathscr{L}}{\mathfrak{p}}$.
\end{corollary}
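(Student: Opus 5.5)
The route through Theorem \ref{homo}(2), using the canonical map $u\mapsto \bar u$, is not available, since that result needs $\mathscr{L}$ to be complemented. I will therefore argue directly in $\frac{\mathscr{L}}{\mathfrak{p}}$, where $\frac{\mathfrak{q}}{\mathfrak{p}}=\{\bar u \mid u\in \mathfrak{q}\}$.

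The first and key step is a membership criterion: for $u\in\mathscr{L}$, $\bar u\in \frac{\mathfrak{q}}{\mathfrak{p}}$ if and only if $u\in\mathfrak{q}$. One direction is trivial. For the other, suppose $\bar u=\bar v$ with $v\in\mathfrak{q}$. Then there are $x,y\in\mathfrak{p}$ with $u\wedge x=v\wedge y$. Since $\mathfrak{p}\subseteq\mathfrak{q}$, we have $v\wedge y\in\mathfrak{q}$. Hence $u\ge u\wedge x\in\mathfrak{q}$, and upward closure gives $u\in\mathfrak{q}$. This is the step I expect to need the most care, because everything else reduces to it; the point is that equivalence classes do not leak outside $\mathfrak{q}$, and this is exactly where $\mathfrak{p}\subseteq\mathfrak{q}$ is used.

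Next I check that $\frac{\mathfrak{q}}{\mathfrak{p}}$ is a proper filter.
\begin{enumerate}
\item It is nonempty, since $\bar 1$ lies in it.
\item It is closed under meets: $\bar u\wedge_Q\bar v=\overline{u\wedge v}$ with $u\wedge v\in\mathfrak{q}$.
\item It is upward closed. If $\bar u\in\frac{\mathfrak{q}}{\mathfrak{p}}$ and $\bar u\le_Q\bar w$, then $\bar w=\bar u\vee_Q\bar w=\overline{u\vee w}$. Since $u\vee w\in\mathfrak{q}$, we get $\bar w\in\frac{\mathfrak{q}}{\mathfrak{p}}$.
\item It is proper. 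If $\bar 0\in\frac{\mathfrak{q}}{\mathfrak{p}}$, the membership criterion gives $0\in\mathfrak{q}$, contradicting properness of $\mathfrak{q}$.
\end{enumerate}

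Finally I verify the $\bar S$-condition. Let $\bar u\vee_Q\bar v\in\frac{\mathfrak{q}}{\mathfrak{p}}$ with $\bar u\in\bar S$, say $\bar u=\bar s$ for some $s\in S$. We need not have $u\in S$, so I replace $u$ by $s$. Then $\overline{s\vee v}=\bar s\vee_Q\bar v=\bar u\vee_Q\bar v\in\frac{\mathfrak{q}}{\mathfrak{p}}$, using that $\vee_Q$ is well defined on classes. The criterion now gives $s\vee v\in\mathfrak{q}$. Since $\mathfrak{q}$ is an $S$-filter and $s\in S$, we get $v\in\mathfrak{q}$, and therefore $\bar v\in\frac{\mathfrak{q}}{\mathfrak{p}}$. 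This proves that $\frac{\mathfrak{q}}{\mathfrak{p}}$ is an $\bar S$-filter of $\frac{\mathscr{L}}{\mathfrak{p}}$.
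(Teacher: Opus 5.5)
Your proof is correct, and it takes a genuinely different route from the paper's.

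The paper argues in one line. The natural epimorphism $\psi\colon u\mapsto u\wedge\mathfrak{p}$ has $\operatorname{Ker}(\psi)=\mathfrak{p}\subseteq\mathfrak{q}$, so it invokes Theorem \ref{homo}(2). As you noticed, that theorem requires $\mathscr{L}$ to be complemented, which the corollary does not assume. So the paper's argument as written only covers complemented $\mathscr{L}$.

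In the proof of Theorem \ref{homo}(2), the complement serves one purpose: to show that $\psi(x)\in\psi(\mathfrak{q}_1)$ forces $x\in\mathfrak{q}_1$. For a general homomorphism both this and the conclusion of Theorem \ref{homo}(2) fail, because the filter kernel does not determine the congruence. For example, map the chain $0<m<1$ onto $\{0,1\}$ by collapsing $0$ and $m$, and take $\mathfrak{q}_1=\{m,1\}$ and $S=\{0\}$.

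Your membership criterion supplies exactly this saturation property for the quotient map, using only $\mathfrak{p}\subseteq\mathfrak{q}$ and upward closure. After that, the $\bar S$-condition transfers just as in the homomorphism argument. You also verify that $\frac{\mathfrak{q}}{\mathfrak{p}}$ is a proper filter, which the proof of Theorem \ref{homo}(2) never checks.

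The paper's route is shorter and reuses a general transfer result, but yours is the one that proves the corollary as stated. Your only outside input is that $\vee_Q$ and $\wedge_Q$ are well defined on classes. The paper asserts this, and it holds in the distributive setting.
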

\begin{proof}
Assume that $\psi: \mathscr{L} \rightarrow \frac{\mathscr{L}}{\mathfrak{p}}$ such that   $u \mapsto u \wedge \mathfrak{p}$. By Proposition 4.4 (1) in \cite{Atani1}, $\psi$ is a natural epimorphism. Then, $\operatorname{Ker}(\psi)=\{u \mid u \wedge \mathfrak{p}=1 \wedge \mathfrak{p}\}=\mathfrak{p} \subseteq \mathfrak{q}$ by Lemma 4.3 in \cite{Atani4}. Thus,  $\psi(\mathfrak{q})=\{u \wedge \mathfrak{p} \mid u \in \mathfrak{q}\}=\frac{\mathfrak{q}}{\mathfrak{p}}$ is an $\bar{S}$-filter of $\frac{\mathscr{L}}{\mathfrak{p}}$ by Theorem \ref{homo} (2).
\end{proof}
Suppose that $(\mathscr{L}_1,\le_1),\ldots,(\mathscr{L}_t,\le_t)$ are lattices ($t \geq 2$). We define a partial order $\le_c$ on $\mathscr{L}_1 \times \cdots \times \mathscr{L}_t$ as follows: $(u_1,\ldots,u_t) \le_c (v_1,\ldots,v_t)$ for all $(u_1,\ldots,u_t), (v_1,\ldots,v_t) \in \mathscr{L}_1 \times \cdots \times \mathscr{L}_t$ if and only if $u_i \le_i v_i$ for all $1 \le i \le t$. It easy to see that $(\mathscr{L},\le_c)$ is a lattice such that $(u_1,\ldots,u_t) \vee_c (v_1,\ldots,v_t)=(u_1 \vee v_1,\cdots,u_t \vee v_t)$ and $(u_1,\ldots,u_t) \wedge_c (v_1,\ldots,v_t)=(u_1 \wedge v_1,\cdots,u_t \wedge v_t)$ for all $(u_1,\ldots,u_t), (v_1,\ldots,v_t) \in \mathscr{L}_1 \times \cdots \times \mathscr{L}_t$. In this case,  $\mathscr{L}=\mathscr{L}_1 \times \cdots \times \mathscr{L}_t$ is called a decomposable lattice \cite{Atani2}.

\begin{theorem} \label{car}
Let $\mathscr{L}=\mathscr{L}_1 \times \cdots \times \mathscr{L}_t$ be a decomposable lattice, and let $\mathfrak{q}_i \subseteq \mathscr{L}_i$ be a filter and $S_i \subseteq \mathscr{L}_i$ be a $\vee$-closed for any $1 \le i \le t$. Then, $\mathfrak{q}= \mathfrak{q}_1 \times \cdots \times  \mathfrak{q}_t$ is an $S_1 \times \cdots \times S_t$-filter of $\mathscr{L}$ if and only if $\mathfrak{q}_i$ is an $S_i$-filter of $\mathscr{L}_i$ for any $1 \le i \le t$.
\end{theorem}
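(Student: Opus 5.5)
The plan is to reduce everything to coordinatewise computations, using that $\vee_c$, $\wedge_c$ and membership in $\mathfrak{q}=\mathfrak{q}_1\times\cdots\times\mathfrak{q}_t$ are all checked coordinate by coordinate. I will also use that $S=S_1\times\cdots\times S_t$ is $\vee$-closed in $\mathscr{L}$, since each $S_i$ is $\vee$-closed and contains $0$.

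\emph{($\Longleftarrow$)} Assume each $\mathfrak{q}_i$ is an $S_i$-filter of $\mathscr{L}_i$.
\begin{enumerate}
\item $\mathfrak{q}$ is a filter, because a product of filters is a filter for the coordinatewise operations.
\item $\mathfrak{q}$ is proper: $\mathfrak{q}_1\neq\mathscr{L}_1$ gives some $x_1\in\mathscr{L}_1\setminus\mathfrak{q}_1$, so $(x_1,1,\ldots,1)\notin\mathfrak{q}$.
\item The $S$-condition holds. Let $u=(u_1,\ldots,u_t)\in S$ and $v=(v_1,\ldots,v_t)$ with $u\vee_c v\in\mathfrak{q}$. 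Then $u_i\vee v_i\in\mathfrak{q}_i$ and $u_i\in S_i$ for every $i$. Hence $v_i\in\mathfrak{q}_i$ for all $i$, that is, $v\in\mathfrak{q}$.
\end{enumerate}

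\emph{($\Longrightarrow$)} Assume $\mathfrak{q}$ is an $S$-filter and fix an index $i$. Let $u_i\in S_i$ and $v_i\in\mathscr{L}_i$ with $u_i\vee v_i\in\mathfrak{q}_i$.
\begin{enumerate}
\item Put $u$ equal to $u_i$ in coordinate $i$ and $0$ elsewhere. Then $u\in S$, since $0\in S_k$ for every $k$.
\item Put $v$ equal to $v_i$ in coordinate $i$ and $1$ elsewhere.
\item Then $u\vee_c v$ has $u_i\vee v_i$ in coordinate $i$ and $1$ elsewhere, so $u\vee_c v\in\mathfrak{q}$, because every filter $\mathfrak{q}_k$ contains $1$.
\item The $S$-filter property gives $v\in\mathfrak{q}$, and therefore $v_i\in\mathfrak{q}_i$.
\end{enumerate}
This shows that each $\mathfrak{q}_i$ satisfies the defining implication of an $S_i$-filter.

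The main obstacle is properness in the forward direction, and it is a real gap in the statement. Take $t=2$, $\mathscr{L}_2=\{0,1\}$, $S_2=\{0\}$, $\mathfrak{q}_2=\{1\}$ and $\mathfrak{q}_1=\mathscr{L}_1$. Then $\mathfrak{q}$ is proper and satisfies the $S$-condition: $u_2=0$ forces $v_2=1$, and the first coordinate imposes nothing. Yet $\mathfrak{q}_1$ is not proper, so it is not an $S_1$-filter.

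So I would prove the forward direction as stated only under the extra hypothesis that each $\mathfrak{q}_i$ is proper. Alternatively, I would conclude that every $\mathfrak{q}_i$ satisfies the $S_i$-implication and is either proper (hence an $S_i$-filter) or all of $\mathscr{L}_i$. I would add a remark to the theorem saying so. The rest is routine coordinatewise checking.
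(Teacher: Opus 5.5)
Your argument is the paper's argument: the converse is the same coordinatewise check, and the forward direction uses the same test elements $(0,\ldots,0,u_k,0,\ldots,0)\in S_1\times\cdots\times S_t$ and $(1,\ldots,1,v_k,1,\ldots,1)$ (the paper misprints its conclusion as $u_k\in\mathfrak{q}_k$ where $v_k\in\mathfrak{q}_k$ is meant). Your properness objection is also correct, and the paper's proof overlooks it because it only checks the $S_i$-implication. The failure is especially clear if you take all $S_i=\{0\}$, since $\{0\}$-filters are exactly the proper filters; so your amended conclusion is the right one: each $\mathfrak{q}_i$ satisfies the $S_i$-implication and is either an $S_i$-filter or all of $\mathscr{L}_i$.
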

\begin{proof}
$\Longrightarrow$ Let $\mathfrak{q}$ be an $S_1 \times \cdots \times S_t$-filter of $\mathscr{L}$. Take any $1 \le k \le t$. Assume that for $u_k,v_k \in \mathscr{L}_k$, $u_k \vee v_k \in \mathfrak{q}_k$  and $u_k \in S_k$. Put 
\[a = ( \underbrace{0, 0, \dots, 0, \underset{\substack{\uparrow \\ k\text{-th}}}{u_k}, 0, \dots, 0}_{t \text{ components}} )\]
and
\[b = ( \underbrace{1, 1, \dots, 1, \underset{\substack{\uparrow \\ k\text{-th}}}{v_k}, 1, \dots, 1}_{t \text{ components}} ).\]
Then, we have 
\[a \vee_c b=(1, 1, \dots, 1,  u_k \vee v_k , 1, \dots, 1) \in \mathfrak{q}.\]
Since $\mathfrak{q}$ is an $S_1 \times \cdots \times S_t$-filter of $\mathscr{L}$ and $a \in S_1 \times \cdots \times S_t$, we conclude that $b \in \mathfrak{q}$ which means $u_k \in \mathfrak{q}_k$. Thus, $\mathfrak{q}_i$ is an $S_i$-filter of $\mathscr{L}_i$ for any $1 \le i \le t$.

$\Longleftarrow$ Assume that for $(u_1,\ldots,u_t),(v_1,\ldots,v_t) \in  \mathscr{L}$, $(u_1,\ldots,u_t) \vee _c(v_1,\ldots,v_t)=(u_1 \vee v_1,\ldots,u_t \vee v_t) \in \mathfrak{q}$ and $(u_1,\ldots,u_t) \in S$. Therefore, we have $u_i \vee v_i \in \mathfrak{q}_i$ for each $1 \leq i \leq t$. Since every $ \mathfrak{q}_i$ is an $S_i$-filter and $u_i \in S_i$, we obtain $v_i \in  \mathfrak{q}_i$ for all $1 \leq i \leq t$. This means $(v_1,\ldots,v_t) \in \mathfrak{q}$. Consequently, $\mathfrak{q}= \mathfrak{q}_1 \times \cdots \times  \mathfrak{q}_t$ is an $S_1 \times \cdots \times S_t$-filter of $\mathscr{L}$.
\end{proof}

\end{document}